\documentclass{amsart}
\usepackage{amssymb}
\usepackage{graphicx}

\newtheorem{thm}{Theorem}
\newtheorem{cor}{Corollary}
\newtheorem{lem}{Lemma}

\newtheorem{rem}{Remark}

\newtheorem{conj}{Conjecture}
\theoremstyle{definition}
\newtheorem{example}[equation]{Example}%[section]
\newtheorem{prob}[equation]{Problem}

\newcommand{\real}{{\operatorname{Re}\,}}

\newcommand{\A}{{\mathcal A}}

\newcommand{\U}{{\mathcal U}}
\newcommand{\es}{{\mathcal S}}

\newcommand{\D}{{\mathbb D}}

\def\be{\begin{equation}}
\def\ee{\end{equation}}

\newcommand{\bee}{\begin{enumerate}}
\newcommand{\eee}{\end{enumerate}}

\newcommand{\blem}{\begin{lem}}
\newcommand{\elem}{\end{lem}}
\newcommand{\bthm}{\begin{thm}}
\newcommand{\ethm}{\end{thm}}
\newcommand{\bcor}{\begin{cor}}
\newcommand{\ecor}{\end{cor}}
\newcommand{\beg}{\begin{example}}
\newcommand{\eeg}{\end{example}}
\newcommand{\begs}{\begin{examples}}
\newcommand{\eegs}{\end{examples}}
\newcommand{\bdefe}{\begin{defin}}
\newcommand{\edefe}{\end{defin}}
\newcommand{\bprob}{\begin{prob}}
\newcommand{\eprob}{\end{prob}}
\newcommand{\bei}{\begin{itemize}}
\newcommand{\eei}{\end{itemize}}

\newcommand{\bcon}{\begin{conj}}
\newcommand{\econ}{\end{conj}}
\newcommand{\bcons}{\begin{conjs}}
\newcommand{\econs}{\end{conjs}}
\newcommand{\bprop}{\begin{propo}}
\newcommand{\eprop}{\end{propo}}
\newcommand{\br}{\begin{rem}}
\newcommand{\er}{\end{rem}}
\newcommand{\brs}{\begin{rems}}
\newcommand{\ers}{\end{rems}}
\newcommand{\bo}{\begin{obser}}
\newcommand{\eo}{\end{obser}}
\newcommand{\bos}{\begin{obsers}}
\newcommand{\eos}{\end{obsers}}
\newcommand{\bpf}{\begin{pf}}
\newcommand{\epf}{\end{pf}}
\newcommand{\ba}{\begin{array}}
\newcommand{\ea}{\end{array}}
\newcommand{\beq}{\begin{eqnarray}}
\newcommand{\beqq}{\begin{eqnarray*}}
\newcommand{\eeq}{\end{eqnarray}}
\newcommand{\eeqq}{\end{eqnarray*}}

\begin{document}
%\linenumbers
%\setpagewiselinenumbers
%\bibliographystyle{amsplain}

\title[Second Hankel determinant for the classes $\U$ and $\es$]{Two types of the second Hankel determinant for the class $\boldsymbol\U$ and the general class $\boldsymbol\es$}

\author[M. Obradovi\'{c}]{Milutin Obradovi\'{c}}
\address{Department of Mathematics,
Faculty of Civil Engineering, University of Belgrade,
Bulevar Kralja Aleksandra 73, 11000, Belgrade, Serbia.}
\email{obrad@grf.bg.ac.rs}

\author[N. Tuneski]{Nikola Tuneski}
\address{Department of Mathematics and Informatics, Faculty of Mechanical Engineering, Ss. Cyril and
Methodius
University in Skopje, Karpo\v{s} II b.b., 1000 Skopje, Republic of North Macedonia.}
\email{nikola.tuneski@mf.ukim.edu.mk}

\subjclass{30C45, 30C55}

\keywords{second order Hankel determinant, class $\U$, classes of univalent functions}

\begin{abstract}
In this paper we determine the upper bounds of the Hankel determinants of special type $H_{2}(3)(f)$ and $H_{2}(4)(f)$ for the class of univalent functions and for the class $\U$ defined by
\[ \U=\left\{ f\in\A : \left|\left[\frac{z}{f(z)}\right]^2 f'(z)-1 \right|<1,\, z\in\D \right\}, \]
where $\A$ is the class of functions analytic in the unit disk $\D$ and normalized such that $f(z)=z+a_2z^2+\cdots$.
\end{abstract}

\maketitle

\section{Introduction and preliminaries}

Let class ${\mathcal A}$ consists of functions analytic in the unit disk $\D:= \{ |z| < 1 \}$
and  are normalized such that
\begin{equation}\label{eq-1}
f(z)=z+a_2z^2+a_3z^3+\cdots,
\end{equation}
i.e., $f(0)=0= f'(0)-1$;
and ${\mathcal S}$ be the class of functions from ${\mathcal A}$ that are univalent in $\D$.

In his paper \cite{zaprawa} Zaprawa considered the following Hankel determinant of second order defined for the coefficients of the function given by \eqref{eq-1}
\[ H_2(n)(f) =  \left |
        \begin{array}{cc}
        a_{n} & a_{n+1}\\
        a_{n+1}&a_{n+2}
        \end{array}
        \right | = a_na_{n+2}-a_{n+1}^2,
\]
for the case when $n=3$. The author studied the upper bound of $|H_2(3)(f)|=|a_3a_5-a_4^2|$ in the cases when $f$ from $\A$ is starlike ($\real [zf'(z)/f(z)]>0$, $z\in\U$), convex ($\real [1+zf''(z)/f'(z)]>0$, $z\in\U$), and with bounded turning ($\real f'(z)>0$, $z\in\U$). These types of functions were studied separately, under the condition that the functions are missing their second coefficient, i.e, $a_2=0$. For the general class $\es$, he proved that $|H_2(3)(f)|>1$. In \cite{OT-2022-3} the authors gave sharp bounds of the modulus of the second Hankel determinant of type $H_2(2)$ of inverse coefficients for various classes of univalent functions.

Another interesting subclass of $\es$ attracting significant interest in the past two decades is
\[ \U=\left\{ f\in\A : \left|\left[\frac{z}{f(z)}\right]^2 f'(z)-1 \right|<1,\, z\in\D \right\}. \]
More details can be found in \cite {OP_2011} and Chapter 12 from \cite{DTV-book}.

\medskip

The object of this paper is to find upper bounds (preferably sharp) of the modulus of the Hankel determinants $H_2(3)(f)=a_3a_5-a_4^2$ and $H_2(4)(f)=a_4a_6-a_5^2$ for the class $\U$, as well as for the general class $\es$.

\medskip

\section{Class $\U$}
For the functions $f$ from the class $\U$ in \cite{OP-2019}, as a pert of the proof of Theorem 1, the following was proven
\begin{equation}\label{repr}
\frac{z}{f(z)} = 1-a_2z-z\omega(z),
\end{equation}
where $|\omega(z)|\le|z|<1$ and $|\omega'(z)|\le1$ for all $z\in\D$, and additionally, for $\omega(z)=c_1z+c_2z^2+\cdots,$
 \begin{equation}\label{eq1.2}
 |c_1|\leq 1,\quad |c_2|\leq \frac{1}{2}(1-|c_1|^2)\quad   \text{and} \quad   |c_3|\leq \frac{1}{3}\left[1-|c_1|^2-\frac{4|c_2|^2}{1+|c_1|}\right].
 \end{equation}
 In a similar way, since $|\omega'(z)|\leq 1$ one can verify that
 \[  |c_4|\leq \frac{1}{4}(1-|c_1|^2-4|c_2|^2).  \]

Further, from \eqref{repr}, we have
\[ z = f(z)\left[1-\left(a_2z+c_1z^2+c_2z^3+\cdots\right)\right], \]
and after equating the coefficients,
\begin{equation}\label{coef}
\begin{split}
a_3&= c_1+a^2_2,\\
a_4&= c_2+2a_2c_1+a_2^3,\\
a_5&= c_3+2a_2c_2+c_1^2+3a_2^2c_1+a_2^4\\
a_6&= c_4+2a_2c_3+2c_1c_2+3a_2^2c_2+3a_2c_1^2+4a_2^3c_1+a_2^5.
\end{split}
\end{equation}

\medskip

Now we can prove the estimates for the class $\U$.

\begin{thm}\label{thm-1}
Let $f\in\U$. Then
\begin{itemize}
  \item[(a)] $|H_2(3)(f)|\le 1$ if $a_2=0$, and the result is sharp due to the function $f(z)=\frac{z}{1-z^2}=z+z^3+z^5+\cdots$.
  \item[(b)] $|H_2(3)(f)|\le 1.4846575\ldots$ for every $f\in\U$.
\end{itemize}
\end{thm}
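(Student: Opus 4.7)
The plan is first to use the coefficient relations \eqref{coef} to express $H_{2}(3)(f) = a_{3}a_{5} - a_{4}^{2}$ in terms of $a_{2}, c_{1}, c_{2}, c_{3}$. Direct expansion and cancellation yield the key identity
\[
H_{2}(3)(f) = (c_{1}+a_{2}^{2})c_{3} - 2a_{2}c_{1}c_{2} + c_{1}^{3} - c_{2}^{2}.
\]

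For part~(a), specializing $a_{2}=0$ gives $H_{2}(3)(f) = c_{1}c_{3}+c_{1}^{3}-c_{2}^{2}$. Writing $x=|c_{1}|$, $y=|c_{2}|$, the triangle inequality combined with the bound on $|c_{3}|$ from \eqref{eq1.2} produces
\[
|H_{2}(3)(f)| \le \frac{x(1-x^{2})}{3} + x^{3} + \frac{(3-x)\,y^{2}}{3(1+x)}.
\]
Since the $y^{2}$-coefficient is positive on $[0,1)$, the bound is maximized at $y=(1-x^{2})/2$, reducing (after clearing denominators) to $(3-2x^{2}+12x^{3}-x^{4})/12$. The required inequality $|H_{2}(3)(f)|\le 1$ is then equivalent to $x^{4}-12x^{3}+2x^{2}+9\ge 0$ on $[0,1]$, and this polynomial factors as $(x-1)(x^{3}-11x^{2}-9x-9)$, a product of two non-positive factors. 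Equality holds at $x=1$, realized by $f(z)=z/(1-z^{2})$, for which $c_{1}=1$ and $c_{2}=c_{3}=0$.

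For part~(b), using the rotational invariance $f(z)\mapsto e^{-i\theta}f(e^{i\theta}z)$ (which preserves $|H_{2}(3)|$), one may assume $a_{2}=t\ge 0$. The crucial sharpening over a naive estimate is to use the $\es$-bound $|a_{3}|=|c_{1}+a_{2}^{2}|\le 3$ rather than the triangle bound $|c_{1}|+|a_{2}|^{2}\le 5$; combined with $|a_{2}|\le 2$, the triangle inequality on the identity above gives
\[
|H_{2}(3)(f)| \le 3|c_{3}| + 4|c_{1}||c_{2}| + |c_{1}|^{3} + |c_{2}|^{2}.
\]
Inserting \eqref{eq1.2} with $x=|c_{1}|$, $y=|c_{2}|$ reduces the right-hand side to
\[
\Phi(x,y) = 1 - x^{2} - \frac{4y^{2}}{1+x} + 4xy + x^{3} + y^{2}
\]
on $0\le x\le 1$, $0\le y\le (1-x^{2})/2$. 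The $y^{2}$-coefficient is $(x-3)/(1+x)<0$, so $\Phi$ is concave in $y$. Its unconstrained maximizer is $y^{*}=2x(1+x)/(3-x)$, which lies in the feasible region precisely when $x\le 4-\sqrt{13}$; on this interval the resulting value stays below $1.24$. For $x\ge 4-\sqrt{13}$ the maximum in $y$ sits on the boundary $y=(1-x^{2})/2$, where $\Phi$ collapses to $\widetilde\Phi(x)=3x-2x^{3}+(1-2x^{2}+x^{4})/4$. The critical equation $\widetilde\Phi'(x) = x^{3}-6x^{2}-x+3 = 0$ has a unique root $x_{0}\approx 0.66187$ in $(0,1)$, and $\widetilde\Phi(x_{0})=1.4846575\ldots$ is the claimed bound.

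Main obstacle: the decisive step is recognizing that the external bound $|a_{3}|\le 3$ (rather than the naive $|c_{1}|+|a_{2}|^{2}\le 5$) is what yields the sharp-looking constant. The optimization then reduces cleanly to a one-variable problem, but the critical cubic $x^{3}-6x^{2}-x+3=0$ has no clean closed form, so the final value must be recorded numerically.
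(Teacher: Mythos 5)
Your proposal is correct and follows essentially the same route as the paper: the same identity $H_2(3)(f)=(c_1+a_2^2)c_3-2a_2c_1c_2+c_1^3-c_2^2$, the same coefficient bounds \eqref{eq1.2}, the same use of $|a_2|\le 2$ and $|a_3|\le 3$, and the same reduction to the boundary curve $y=\frac12(1-x^2)$, yielding the quartic whose maximum $1.4846575\ldots$ occurs at the root of $x^3-6x^2-x+3$. The only departures are cosmetic: in (a) you finish by factoring $x^4-12x^3+2x^2+9=(x-1)(x^3-11x^2-9x-9)$ instead of the paper's derivative analysis, and in (b) you exploit concavity in $y$ where the paper relaxes one term and then checks all boundary pieces of the domain.
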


\begin{proof}
Using \eqref{coef}, after some calculations we receive
\[ H_2(3)(f) = a_3a_5-a_4^2 = (c_1+a_2^2)c_3-2a_2c_1c_2+c_1^3-c_2^2,\]
and from here
\begin{equation}\label{eq-p1-1}
|H_2(3)(f)| \le |c_1+a_2^2||c_3|+2|a_2||c_1||c_2|+|c_1|^3+|c_2|^2.
\end{equation}
\begin{itemize}
  \item[(a)] If $a_2=0$, from \eqref{eq-p1-1} we receive
\[  |H_2(3)(f)| \le |c_1||c_3|+|c_1|^3+|c_2|^2,\]
and using \eqref{eq1.2},
\[
\begin{split}
|H_2(3)(f)| &\le |c_1|\cdot \frac{1}{3}\cdot \left[1-|c_1|^2-\frac{4|c_2|^2}{1+|c_1|}\right]+|c_1|^3+|c_2|^2\\
&= \frac13\left(|c_1|-|c_1|^3\right) + \frac{3-|c_1|}{3(1+|c_1|)}|c_2|^2+|c_1|^3\\
&\le \frac13|c_1|+\frac23|c_1|^3 + \frac{3-|c_1|}{3(1+|c_1|)} \frac14 (1- |c_1|^2)^2\\
&= \frac{1}{12}\left( 3-2|c_1|^2+12|c_1|^3-|c_1|^4 \right)\equiv h_1(|c_1|),
\end{split}
\]
where $h_1(t)=\frac{1}{12}\left( 3-2t^2+12t^3-t^4 \right)$ and $t=|c_1|\le1$ (see \eqref{eq1.2}). Now, $h_1'(t)=-\frac{1}{3}c(1-9c+c^2)$ vanishes in only one point on the interval $(0,1)$ and that is a minimum of $h_1$ on the interval since $h_1(t)<0$ for small enough positive numbers (let say for $t=0.1$). Therefore
\[ \max \{h_1(t):t\in[0,1]\} = \max \{h_1(0),h_1(1)\} = h_1(1)=1, \]
i.e., $|H_2(3)(f)|\le 1$. The sharpness of the estimate follows from the function $f(z)=\frac{z}{1-z^2}$ with $a_2=a_4=0$ and $a_3=a_5=1$.
\medskip

  \item[(b)] Since $\U\subset\es$, we have $|a_2|\le2$ and $|a_3|=|c_1+a_2^2|\le3$, from \eqref{eq-p1-1} we have
  \[ |H_2(3)(f)|\le 3|c_3|+4|c_1||c_2|+|c_1|^3+|c_2|^2 \equiv \varphi_1(|c_1|,|c_2|,|c_3|), \]
  where $\varphi_1(x,y,z)=3z+4xy+x^3+y^2$ with due to \eqref{eq1.2},
%  \begin{equation}\label{cond-1}
 \[   0\le x\le1, \quad 0\le y\le \frac12(1-x^2), \quad 0\le z\le \frac13\left( 1-x^2-\frac{4y^2}{1+x} \right).\]
%  \end{equation}
  It evident that
  \[
  \begin{split}
  \varphi_1(x,y,z) &\le 3\cdot \frac13\left( 1-x^2-\frac{4y^2}{1+x}\right) +4xy+x^3+y^2 \\
  &= 1-x^2+\left(4-\frac{4}{1+x}\right)y^2 -3y^2+4xy+x^3\\
    &\le 1-x^2+ \frac{4x}{1+x}\cdot \frac14 \left(1-x^2\right)^2 -3y^2+4xy+x^3\\
    &= 1+x-2x^2+x^4+4xy-3y^2\equiv \psi(x,y).
    \end{split}
    \]
    It remains to find the maximal value of $\psi$ on the domain $\Omega_1=\Big\{(x,y):0\le x\le1,\, 0\le y\le \frac12(1-x^2)\Big\}$.

    Since $\psi'_y(x,y)=4x-6y$ vanishes for $x=\frac32y$, and $\psi'_x(3y/2,y)= 1-2y+\frac{27 }{2}y^3$ vanishes only for $y=-0.535\ldots$ we realize that $\psi$ attains its maximal value on the boundary of $\Omega_1$. Finally, when $x=0$ or $x=1$, the maximum is 1, while for $y=0$, maximum is $1.1295\ldots$ for $x=0.26959\ldots$, and for $y=\frac12(1-x^2)$, maximum is $1.4846575\ldots$, for $x=0.6618\ldots$. This completes the proof.
\end{itemize}
\end{proof}

\begin{thm}\label{thm-2}
Let $f\in\U$ and $a_2=0$. Then $|H_2(4)(f)|\le 1$ and the estimate is sharp due to the function $f(z)=\frac{z}{1-z^2}=z+z^3+z^5+z^7+\cdots$.
\end{thm}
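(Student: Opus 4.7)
The plan is to mirror the proof of Theorem \ref{thm-1}(a): expand $H_2(4)(f)$ in terms of $c_1,c_2,c_3,c_4$, apply the triangle inequality, insert the bounds \eqref{eq1.2} together with the estimate $|c_4|\le\frac{1}{4}(1-|c_1|^2-4|c_2|^2)$ recorded just before \eqref{coef}, and maximize the resulting two-variable expression. Setting $a_2=0$ in \eqref{coef} gives
\[
a_4=c_2,\qquad a_5=c_3+c_1^2,\qquad a_6=c_4+2c_1c_2,
\]
so that
\[
H_2(4)(f) = c_2(c_4+2c_1c_2)-(c_3+c_1^2)^2,
\]
and hence
\[
|H_2(4)(f)|\le |c_2||c_4|+2|c_1||c_2|^2+(|c_3|+|c_1|^2)^2.
\]

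Writing $x=|c_1|$ and $y=|c_2|$ and inserting the stated bounds on $|c_3|$ and $|c_4|$ turns the right-hand side into an explicit polynomial $\Psi(x,y)$ on the triangle $\Omega_2=\{(x,y):\,0\le x\le 1,\ 0\le y\le\tfrac{1}{2}(1-x^2)\}$, namely
\[
\Psi(x,y)=\frac{y(1-x^2)}{4}-y^3+2xy^2+\left(\frac{1+2x^2}{3}-\frac{4y^2}{3(1+x)}\right)^{2}.
\]
Note that the constraint $y\le\tfrac{1}{2}(1-x^2)$ automatically yields $4y^2\le(1-x^2)^2\le 1-x^2$, so the $|c_4|$-estimate is legitimately nonnegative throughout $\Omega_2$ and no case splitting is needed.

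The main task, and the main obstacle, is to verify $\max_{\Omega_2}\Psi=1$, attained at $(x,y)=(1,0)$. On the edge $y=0$ one has $\Psi(x,0)=\bigl(\tfrac{1+2x^2}{3}\bigr)^{2}\le 1$ with equality only at $x=1$; on the edge $y=\tfrac{1}{2}(1-x^2)$ the $|c_4|$-term collapses and a direct substitution keeps the value strictly below $1$; the edge $x=1$ forces $y=0$. The interior is more delicate: since $\Psi_y(x,0)=\tfrac{1-x^2}{4}>0$ for $x<1$, the point $y=0$ is \emph{not} a maximum along vertical slices, so one must check that the positive contribution of the $y$-polynomial is always offset by the loss produced by the $|c_3|$-bound. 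I would close the argument by solving $\nabla\Psi=0$ and verifying that the finitely many critical points lying in $\Omega_2$ all satisfy $\Psi<1$. Sharpness is immediate from $f(z)=z/(1-z^2)\in\U$, for which $a_4=0$, $a_5=1$, $a_6=0$, and thus $|a_4a_6-a_5^2|=1$.
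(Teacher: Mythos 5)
Your setup is sound and matches the paper's: the coefficient formulas for $a_4,a_5,a_6$ with $a_2=0$, the expansion of $H_2(4)(f)$, the triangle inequality, and the insertion of the bounds on $|c_3|$ and $|c_4|$ are all correct (and your check that $1-x^2-\tfrac{4y^2}{1+x}\ge0$ and $1-x^2-4y^2\ge0$ on $\Omega_2$ is a legitimate point). The problem is that the decisive step --- proving $\max_{\Omega_2}\Psi=1$ --- is not actually carried out. ``I would close the argument by solving $\nabla\Psi=0$ and verifying that the finitely many critical points\ldots all satisfy $\Psi<1$'' is a plan, not a proof, and you yourself identify this as ``the main task, and the main obstacle.'' Your edge analysis is also partly wrong: on $y=\tfrac12(1-x^2)$ the $|c_4|$-factor does \emph{not} collapse, since $1-x^2-4y^2=x^2(1-x^2)\neq0$ there, and no computation is given to support ``strictly below $1$'' on that edge. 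Since $\Psi_y(x,0)>0$ for $x<1$, as you note, the entire burden of the proof sits exactly in the interior/upper-edge analysis that is missing.

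The paper avoids this two-variable optimization with one extra relaxation that you did not make: in the product $|c_2||c_4|$ it bounds the \emph{factor} $|c_2|$ by $\tfrac12(1-|c_1|^2)$ while keeping the $|c_2|$-dependence inside the $|c_4|$-estimate, so the term becomes $\tfrac12(1-|c_1|^2)\cdot\tfrac14(1-|c_1|^2-4|c_2|^2)$. This eliminates the odd powers of $|c_2|$, and the whole majorant takes the form $A|c_2|^4+B|c_2|^2+C$ with $A=\tfrac{16}{9(1+|c_1|)^2}>0$; being convex in $|c_2|^2$, it is maximized at $|c_2|=0$ or $|c_2|=\tfrac12(1-|c_1|^2)$, and each endpoint yields a one-variable polynomial in $|c_1|$ that is shown to be at most $1$ by elementary calculus. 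Your $\Psi$ is pointwise dominated by the paper's majorant on $\Omega_2$, so your claimed inequality $\max_{\Omega_2}\Psi\le1$ is in fact true, but as written your argument does not establish it. Either adopt the paper's relaxation to recover the convexity-in-$|c_2|^2$ structure, or supply the full critical-point computation you deferred.
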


\begin{proof}
If $f\in\U$ and $a_2=0$, then from \eqref{coef} we receive
\[ a_4=c_2,\quad a_5=c_3+c_1^2,\quad c_6=c_4+2c_1c_2,  \]
and further,
\[ H_2(4)(f) = a_4a_6-a_5^2 = c_2c_4+2c_1c_2^2-c_3^2-2c_1^2c_3+c_1^4,\]
and using \eqref{eq1.2},
\[
\begin{split}
&\quad |H_2(4)(f)| \\
&\le |c_2||c_4|+2|c_1||c_2|^2+|c_3|^2+2|c_1|^2|c_3|+|c_1|^4\\
&\le \frac12(1-|c_1|^2)\cdot\frac14(1-|c_1|^2-4|c_2|^2)+2|c_1||c_2|^2+\frac19\left( 1-|c_1|^2-\frac{4|c_2|^2}{1+|c_1|} \right)^2 \\
&\quad+ 2|c_1|^2\cdot \frac13\left( 1-|c_1|^2-\frac{4|c_2|^2}{1+|c_1|} \right) +|c_1|^4\\
&= A|c_2|^4+B|c_2|^2+C \equiv h_2(|c_2|),
\end{split}
\]
where $h_2(t)=At^4+Bt^2+C$,
\[
\begin{split}
A&=\frac{16}{9(1+|c_1|)^2},\\
B&=2|c_1|-\frac{1}{2} (1-|c_1|^2)-\frac{8}{9} (1-|c_1|)-\frac{8}{3} \frac{|c_1|^2}{1+|c_1|},\\
C&=\frac{17}{72}(1-|c_1|^2)^2+\frac23|c_1|^2(1-|c_1|^2)+|c_1|^4,
\end{split}
\]
with $A>0$, $0\le |c_2|\le \frac12(1-|c_1|^2)$ and $|c_1|\le1$. Therefore, $h_2$ attains its maximal value on the boundary, i.e.,
\[ \max h_2(|c_2|) = \max \left\{ h_2(0), h_2\left(\frac12(1-|c_1|^2) \right) \right\}. \]

\medskip

Now, let note that $h_2(0)=C\equiv g_1(|c_1|)$, where $g_1(t)=\frac{1}{72}(41t^4+14t^2+17)$, has a maximal value 1 when $0\le t=|c_1|\le1$, attained for $t=1$.

\medskip

Further, let $g_2(|c_1|)\equiv h_2\left(\frac12(1-|c_1|^2) \right)$, where
\[ g_2(t)=\frac{1}{72} (17 t^6-12 t^5+38 t^4-24 t^3+17 t^2+36t),\]
$0\le t=|c_1|\le1$. In order to complete the proof of the theorem it is enough to show that this function is increasing on the interval $[0,1]$, which will lead to the conclusion that $h_2\left(\frac12(1-|c_1|^2) \right)=g_2(|c_1|)\le g_2(1)=1$.

Indeed, $g_2'''(t)=\frac{1}{72} \left(1020 t^2-288 t+228\right)>0$ for all $t\in[0,1]$, meaning that $g_2''(t)=\frac{1}{72}\left(340 t^3-144 t^2+228 t-48\right)$ is increasing on the same interval. Since $g_2''(0)<0$ and $g_2''(1)>0$, there is only one real solution of $g_2''(t)=0$ on $[0,1]$, i.e., only one local extreme (minimum) on $[0,1]$ for $t_*=0.22554\ldots$ with value $g_2'(t_*)=39.028\ldots>0$. Thus,  $g_2'(t)>0$ for all $t\in[0,1]$.
\end{proof}

Theorem \ref{thm-1}(a) and Theorem \ref{thm-2} are motivation for the following conjecture for the functions from  $\U$ with missing second coefficient.

\begin{conj}
Let $f\in\U$ and $a_2=0$. Then $|H_2(n)(f)|=|a_na_{n+2}-a_{n+1}^2|\le 1$ for any integer $n\ge3$. The estimate is a sharp due to the function $f(z)=\frac{z}{1-z^2}= \sum_{n=1}^\infty z^{2n-1}$.
\end{conj}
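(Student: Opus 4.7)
The plan is to extend the technique of Theorems \ref{thm-1}(a) and \ref{thm-2} to general $n$. Since $a_2=0$, the representation \eqref{repr} simplifies to $z/f(z)=1-z\omega(z)$, so expanding the geometric series gives
\[ f(z) = z + z^2\omega(z) + z^3\omega(z)^2 + z^4\omega(z)^3 + \cdots, \]
and each $a_n$ is a polynomial in $c_1,\ldots,c_{n-2}$ that is homogeneous of weight $n-1$ when one assigns weight $j+1$ to $c_j$. Under this grading the only monomial of $a_{2m+1}$ in the single variable $c_1$ is $c_1^m$, while $a_{2m}$ contains no such pure term. Consequently one expects the decomposition
\[ H_2(n)(f) = (-1)^{n-1}c_1^n + R_n(c_1,c_2,\ldots,c_n), \]
where $R_n$ vanishes whenever $c_2=c_3=\cdots=0$, and the isolated monomial $(-1)^{n-1}c_1^n$ accounts precisely for the value $\pm 1$ realized by the extremal $f(z)=z/(1-z^2)$.

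The first step is to compute $a_n,a_{n+1},a_{n+2}$ explicitly by reading off the coefficients of $\omega(z)^k$ for $0\le k\le (n+1)/2$, substitute into $H_2(n)(f)=a_n a_{n+2}-a_{n+1}^2$, and collect terms to identify $R_n$. The second step applies the triangle inequality together with the Schwarz coefficient estimates \eqref{eq1.2}, the companion bound $|c_4|\le\tfrac14(1-|c_1|^2-4|c_2|^2)$ already used in the paper, and analogous higher-order estimates for $|c_k|$, $k\ge 5$, obtained by iteratively applying the Schwarz--Pick lemma to $\omega$ and $\omega'$ (both of modulus at most $1$ on $\D$). After substitution, the problem should reduce to verifying $\psi(x_1,\ldots,x_m)\le 1$ for a polynomial $\psi$ in the moduli $x_j=|c_j|$ on the nested constraint region defined by these Schur-type inequalities, with the supremum attained at the vertex $x_1=1$, $x_k=0$ for $k\ge 2$.

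The principal difficulty is that the number of monomials in $R_n$ grows rapidly with $n$, and the crude triangle-inequality bound typically loses sharpness away from the extremal configuration, so the target $\le 1$ will not follow directly from the Schur bounds monomial-by-monomial. A more uniform strategy is to exploit the convolution identity $a_{n+1}=\sum_{j=1}^{n-1}c_j a_{n-j}$, obtained by equating coefficients in $f(z)(1-z\omega(z))=z$, to produce the recurrence
\[ H_2(n)(f) = -c_1 H_2(n-1)(f) + \sum_{j=2}^{n-1}c_j\bigl(a_n a_{n+1-j} - a_{n+1}a_{n-j}\bigr) + c_n a_n, \]
which opens an inductive route on $n$. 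A potentially cleaner alternative is to parameterize $\omega(z)/z$ by its Schur parameters $\gamma_0,\gamma_1,\ldots$ in $\overline{\D}$ and invoke the classical fact that extremal values of polynomial functionals over the Schur class are attained at finite Blaschke products; this reduces the conjecture to a finite-dimensional optimization whose supremum one expects to be realized by $\omega(z)=\lambda z$ with $|\lambda|=1$. In either formulation the crux is to produce an identity or estimate strong enough to absorb every cross-term into the available slack $1-|c_1|^n$, a step that presently has no visible shortcut.
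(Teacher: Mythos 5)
The statement you are addressing is labelled a \emph{Conjecture} in the paper: the authors prove only the cases $n=3$ and $n=4$ (Theorems \ref{thm-1}(a) and \ref{thm-2}) and explicitly leave the general case open, so there is no proof in the paper to compare yours against. Your proposal does not close that gap either, and you say as much in your final sentence. What you do establish correctly is useful structural bookkeeping: with $a_2=0$ the representation \eqref{repr} gives $f(z)=z\sum_{k\ge0}(z\omega(z))^k$, each $a_n$ is weighted-homogeneous of weight $n-1$ (with $c_j$ of weight $j+1$), the pure $c_1$-monomial of $a_{2m+1}$ is exactly $c_1^m$ while $a_{2m}$ has none, hence $H_2(n)(f)=(-1)^{n-1}c_1^n+R_n$ with $R_n$ vanishing when $c_2=c_3=\cdots=0$; and the convolution identity $a_{n+1}=\sum_{j=1}^{n-1}c_ja_{n-j}$ together with the resulting recurrence for $H_2(n)$ is a correct computation. (Incidentally, your sign $(-1)^{n-1}c_1^n$ is right; the displayed $+c_1^4$ in the paper's proof of Theorem \ref{thm-2} is a harmless sign typo, as the extremal function gives $H_2(4)=-1$.)

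The genuine gap is the entire analytic content. First, the higher coefficient bounds you invoke for $|c_k|$, $k\ge5$, are not available off the shelf: the chain \eqref{eq1.2} and the bound on $|c_4|$ are derived from the specific properties $|\omega(z)|\le|z|$ \emph{and} $|\omega'(z)|\le1$ established for the class $\U$, and you would have to prove the analogous Schur-type inequalities for all $k$ before the constraint region you describe even exists. Second, and more seriously, the triangle-inequality-plus-vertex strategy demonstrably loses too much: already for $n=3,4$ the paper has to run delicate case analyses on the boundary of the constraint region to squeeze the bound down to $1$, and for general $n$ the number of cross-terms in $R_n$ grows without any mechanism to absorb them into the slack $1-|c_1|^n$. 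Your recurrence and your Schur-parameter reduction are both plausible routes, but neither comes with the inductive inequality or the variational argument that would actually force the supremum to occur at $\omega(z)=\lambda z$; the reduction to finite Blaschke products does not by itself single out degree one. As it stands the proposal is a research programme for an open problem, not a proof, and it should be presented as such.
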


\medskip

\section{General class $\es$}

For obtaining the estimates of the modulus of $H_2(3)(f)$ for the general class $\mathcal{S}$  we will use method based on Grunsky coefficients based on the results and notations given in the book of N.A. Lebedev (\cite{Lebedev}) as follows.

Let $f \in \mathcal{S}$ and let
\[
%\be\label{e2}
\log\frac{f(t)-f(z)}{t-z}=\sum_{p,q=0}^{\infty}\omega_{p,q}t^{p}z^{q},
%\ee
\]
where $\omega_{p,q}$ are the Grunsky's coefficients with property $\omega_{p,q}=\omega_{q,p}$.
For those coefficients the next Grunsky's inequality (\cite{duren,Lebedev}) holds:
\be\label{eq-24}
\sum_{q=1}^{\infty}q \left|\sum_{p=1}^{\infty}\omega_{p,q}x_{p}\right|^{2}\leq \sum_{p=1}^{\infty}\frac{|x_{p}|^{2}}{p},
\ee
where $x_{p}$ are arbitrary complex numbers such that last series converges.

Further, it is well-known that if the function $f$ given by \eqref{eq-1}
belongs to $\mathcal{S}$, then also
\be\label{eq-25}
\tilde{f_{2}}(z)=\sqrt{f(z^{2})}=z +c_{3}z^3+c_{5}z^{5}+\cdots
\ee
belongs to the class $\mathcal{S}$. Then, for the function $\tilde{f_{2}}$ we have the appropriate Grunsky's
coefficients of the form $\omega_{2p-1,2q-1}^{(2)}$ and the inequality \eqref{eq-24} has the form:
\be\label{eq-26}
\sum_{q=1}^{\infty}(2q-1) \left|\sum_{p=1}^{\infty}\omega_{2p-1,2q-1}x_{2p-1}\right|^{2}\leq \sum_{p=1}^{\infty}\frac{|x_{2p-1}|^{2}}{2p-1}.
\ee

Here, and further in the paper we omit the upper index (2) in  $\omega_{2p-1,2q-1}^{(2)}$ if compared with Lebedev's notation.

\medskip

If in the inequality \eqref{eq-26} we put $x_{1}=1$ and $x_{2p-1}=0$ for $p=2,3,\ldots$, then we receive
\be\label{eq-27}
|\omega_{11} |^2 +3|\omega_{13}|^2 + 5|\omega_{15} |^2 +7|\omega_{17}|^2\leq 1.
\ee

As it has been shown in \cite[p.57]{Lebedev}, if $f$ is given by \eqref{eq-1} then the coefficients $a_{2}$, $ a_{3}$, $ a_{4}$ and $a_5$ are expressed by Grunsky's coefficients  $\omega_{2p-1,2q-1}$ of the function $\tilde{f}_{2}$ given by
\eqref{eq-25} in the following way:
\be\label{eq-28}
\begin{split}
a_{2}&=2\omega _{11},\\
a_{3}&=2\omega_{13}+3\omega_{11}^{2}, \\
a_{4}&=2\omega_{33}+8\omega_{11}\omega_{13}+\frac{10}{3}\omega_{11}^{3},\\
a_{5}&=2\omega_{35}+8\omega_{11}\omega_{33}+5\omega_{13}^{2}+18\omega_{11}^2\omega_{13}+\frac73\omega_{11}^4,\\
0&= 3\omega_{15}-3\omega_{11}\omega_{13}+\omega_{11}^3-3\omega_{33},\\
0&=\omega_{17}-\omega_{35}-\omega_{11}\omega_{33}-\omega_{13}^{2}+\frac{1}{3}\omega_{11}^{4}.
\end{split}
\ee

We note that in the cited book of Lebedev there exists a typing mistake for the coefficient $a_{5}$. Namely, instead of the term $5\omega_{13}^{2}$, there is $5\omega_{15}^{2}$.

\bthm\label{22-th-2}
Let $f\in\mathcal{S}$ is given by \eqref{eq-1}. Then
\begin{itemize}
  \item[(a)] $|H_2(3)(f)|\le 2.02757\ldots$ if $a_2=0$;
  \item[(b)] $|H_2(3)(f)|\le 4.8986977\ldots$ for every $f\in\es$.
\end{itemize}
\ethm

\begin{proof} From the fifth relation of \eqref{eq-28} we have
\[ \omega_{33} = \omega_{15} -\omega_{11}\omega_{13}+\frac13\omega_{11}^3.\]
This, together with the sixth relation from \eqref{eq-28} brings
\[ \omega_{35} = \omega_{17}-\omega_{11}\omega_{15}+\omega_{11}^2\omega_{13}-\omega_{13}^2. \]
By applying the two expressions from above in the relations for $a_4$ and $a_5$ from \eqref{eq-28} we obtain
\[
\begin{split}
a_4 &= 2\omega_{15}+6\omega_{11}\omega_{13}+4\omega_{11}^3,\\
a_5 &= 2\omega_{17}+6\omega_{11}\omega_{15}+12\omega_{11}^2\omega_{13}+3\omega_{13}^2+5\omega_{11}^4.
\end{split}
\]
Finally, these two relations, together with the relation for $a_3$ from \eqref{eq-28} bring
\begin{equation}\label{eq-q}
\begin{split}
H_2(3)(f) &= a_3a_5-a_4^2\\
&= 2(2\omega_{13}+3\omega_{11}^2)\omega_{17} - 12\omega_{11}\omega_{13}\omega_{15} - 3\omega_{11}^2\omega_{13}^2 + 6\omega_{13}^3\\
&\quad  - 2\omega_{11}^4\omega_{13} + 2\omega_{11}^3\omega_{15} - \omega_{11}^6 - 4 \omega_{15}^2.
\end{split}
\end{equation}

\begin{itemize}
  \item[(a)] If $a_2=2\omega_{11}=0$, then $\omega_{11}=0$, and we receive
\[H_2(3)(f) = 4\omega_{13}\omega_{17}+6\omega_{13}^3-4\omega_{15}^2,\]
with the following constraints over $\omega_{13}$, $\omega_{15}$ and $\omega_{17}$ obtained from \eqref{eq-27}:
\[ |\omega_{13}|\le \frac{1}{\sqrt3}, \quad |\omega_{15}|\le \frac{1}{\sqrt5}\sqrt{1-3|\omega_{13}|^2},\]
and
\[|\omega_{17}|\le \frac{1}{\sqrt7}\sqrt{1-3|\omega_{13}|^2-5|\omega_{15}|^2}.\]
So,
\[
\begin{split}
|H_2(3)(f)| &= 4|\omega_{13}||\omega_{17}|+6|\omega_{13}|^3+4|\omega_{15}|^2 \\
& \le  \frac{4}{\sqrt7} |\omega_{13}|\sqrt{1-3|\omega_{13}|^2-5|\omega_{15}|^2}+6|\omega_{13}|^3+4|\omega_{15}|^2 \\
&= \psi_1(|\omega_{13}|,|\omega_{15}|),
\end{split}
\]
where $\psi_1(y,z) = \frac{4}{\sqrt7} y\sqrt{1-3y^2-5z^2}+6y^3+4z^2$ with $0\le y=|\omega_{13}|\le\frac{1}{\sqrt3}$, $0\le z=|\omega_{15}|\le \frac{1}{\sqrt5}\sqrt{1-3y^2}$.
It remains to find upper bound of the function $\psi_1(y,z)$ on its domain
\[ \Omega=\left\{(y,z): 0\le y\le\frac{1}{\sqrt3}, \, 0\le z\le \frac{1}{\sqrt5}\sqrt{1-3y^2}\right\}. \]
Not being able to do better and leaving the sharp bound as an open problem, we continue with what is easy to get:
\[
\begin{split}
\psi_1(y,z) &\le \frac{4}{\sqrt7}y+6y^3+\frac45(1-3y^2)= \frac45+\frac{4}{\sqrt7}y - \frac{12}{5}y^2+6y^3\\
&\le \frac{4}{\sqrt{21}}+\frac{2}{\sqrt3} = 2.02757\ldots,
\end{split}
\]
obtained for $y=\frac{1}{\sqrt3}$.

\medskip

  \item[(b)] In the general case, if $a_2\neq0$, since $|a_2|\le2$ and $|c_1+a_2^2|=|a_3|\le3$, from \eqref{eq-q} we get
\[
\begin{split}
|H_2(3)(f)| &= 6|\omega_{17}|+12|\omega_{11}||\omega_{13}||\omega_{15}| +3|\omega_{11}|^2|\omega_{13}|^2 \\
&\quad +6|\omega_{13}|^3+2|\omega_{11}|^4|\omega_{13}| + 2|\omega_{11}|^3|\omega_{15}| \\
&\quad +|\omega_{11}|^6 +4|\omega_{15}|^2\\
&\le 6\cdot \frac{1}{\sqrt7}\sqrt{1-|\omega_{11}|^2-3|\omega_{13}|^2-5|\omega_{15}|^2}+12|\omega_{11}||\omega_{13}||\omega_{15}|  \\
&\quad +3|\omega_{11}|^2|\omega_{13}|^2+6|\omega_{13}|^3+2|\omega_{11}|^4|\omega_{13}| \\
&\quad + 2|\omega_{11}|^3|\omega_{15}| +|\omega_{11}|^6 +4|\omega_{15}|^2 \\
&= \psi_2(|\omega_{11}|,|\omega_{13}|,\omega_{15}|),
\end{split}
\]
where 
\[\begin{split}
\psi_2(x,y,z) &= \frac{6}{\sqrt7}\sqrt{1-x^2-3y^2-5z^2}+12xyz +3x^2y^2 \\
&\quad +6y^3+2x^4y + 2x^3z +x^6 +4z^2
\end{split}
\]
with $0\le x=|\omega_{11}|\le1$, $0\le y=|\omega_{13}|\le\frac{1}{\sqrt3}\sqrt{1-x^2}$, $0\le z=|\omega_{15}|\le \frac{1}{\sqrt5}\sqrt{1-x^2-3y^2}$.
Similarly as in the part (a), finding upper bound of the function $\psi_2(x,y,z)$ on its domain
\[ 
\begin{split}
&\left\{(x,y,z): 0\le x\le1,\, 0\le y\le\frac{1}{\sqrt3}\sqrt{1-x^2}, \right.\\
& \quad  \left.0\le z\le \frac{1}{\sqrt5}\sqrt{1-x^2-3y^2}\right\},
\end{split}
\]
is still an open problem, even though analysis suggest that it is 1. Easy way arround, leading to a non-sharp upper bound is:
\[
\begin{split}
\psi_2(x,y,z) &\le \frac{6}{\sqrt7}\sqrt{1-x^2}+12xyz +3x^2y^2 \\
&\quad +6y^3+2x^4y + 2x^3z +x^6 +4z^2,
\end{split}
\]
which after applying $y\le\frac{1}{\sqrt3}\sqrt{1-x^2}$ and $z\le\frac{1}{\sqrt5}\sqrt{1-x^2}$ leads to
\[
\begin{split}
\psi_2(x,y,z) &\le \frac{6}{\sqrt7}\sqrt{1-x^2}+\frac{12}{\sqrt{15}}x(1-x^2) +\frac{3}{15}(1-x^2)^2 \\
&\quad +\frac{6}{3\sqrt3}(1-x^2)\sqrt{1-x^2}+\frac{2}{\sqrt3}x^4\sqrt{1-x^2} \\
&\quad + \frac{2}{\sqrt5}x^3\sqrt{1-x^2} +x^6 +\frac45(1-x^2)^2\equiv h_*(x).
\end{split}
\]
Numerical computations show that this function has maximal value $4.8986977\ldots$ obtained for $x=0.3945667\ldots$.
\end{itemize}
\end{proof}

\medskip

\end{document}